\documentclass[a4paper,12pt]{article}

\usepackage[english]{babel}
\usepackage{graphicx}
\usepackage{amssymb}
\usepackage{amsthm}
\usepackage{setspace}

\usepackage{amsmath,amsthm,amscd,amsfonts,mathrsfs,amssymb}
\usepackage{graphicx,enumerate}

\usepackage{mathpazo}

\usepackage[all]{xy}
\usepackage{fullpage} 
\usepackage{color}
\usepackage{makeidx}
\usepackage{alltt}
\usepackage{setspace}

\tolerance=500
\setlength{\emergencystretch}{3em}



\newtheorem{theorem}{Theorem}[section]
\newtheorem{lemma}[theorem]{Lemma}

\theoremstyle{definition}
\newtheorem{remark}[theorem]{Remark}
\theoremstyle{definition}


\newcommand{\dd}{\;{\rm{d}}}
\newcommand{\ssl}[2]{\textrm{SL}(#1,\mathbb{#2})}
\newcommand{\sym}{\textrm{Sym}}
\newcommand{\summ}{\sum_{n=1}^\infty}

\newcommand{\real}{\Re}
\newcommand{\lambdaf}{\lambda_\phi}
\newcommand{\twopii}{\frac{1}{2\pi i}}
\newcommand{\gammao}{\Gamma_{o}}
\newcommand{\adjoint}{{\rm Ad}\;}
\newcommand{\gjf}{A_\phi}
\newcommand{\chif}{\chi_\phi}
\newcommand{\andd}{\quad\text{ and }\quad}
\newcommand{\bra}[1]{\left( #1 \right)}
\newcommand{\sigmaa}{3/4}
\newcommand{\sbb}{\mathfrak{S}}
\newcommand{\sumb}[2]{\underset{#1}{\overset{#2}{\;\sum{}^\flat}}}
\newcommand{\gjff}{A_\phi^{[4]}}
\newcommand{\gjft}{A_\phi^{[3]}}
\newcommand{\raman}{\#\{p\leq X,|\lambda _{\phi}(p)| \leq 2\}}

\begin{document}

\title{\scshape On the Hecke Eigenvalues of Maass Forms}

\author{Wenzhi Luo\footnote{Research of W. Luo is
partially supported by NSF grant DMS-1160647.}$\;$ and Fan Zhou}
\maketitle

\begin{abstract}
Let $\phi$ denote a primitive Hecke-Maass cusp form for $\Gamma_o(N)$ with the Laplacian eigenvalue $\lambda_\phi=1/4+t_{\phi}^2$. In this work we show that there exists a prime $p$ such that $p\nmid N$, $|\alpha_{p}|=|\beta_{p}| = 1$, and $p\ll(N(1+|t_{\phi}|))^c$, where $\alpha _{p},\;\beta _{p}$ are the Satake parameters of $\phi$ at $p$, and $c$ is an absolute constant with $0<c<1$. In fact, $c$ can be taken as $0.27332$. In addition, we prove that the natural density of such primes $p$ ($p\nmid N$ and $|\alpha_{p}|=|\beta_{p}| = 1$) is at least 34/35.
\\
\\
MSC: 11F30 (Primary) 11F41, 11F12 (Secondary)
\end{abstract}


\section{Introduction}
The celebrated Ramanujan-Petersson conjecture for an elliptic
cuspidal Hecke eigenform $f$ of weight $k \geq 2$ and level $N$ asserts that
for any prime $p \nmid N$,
$$|\lambda_f (p)| \leq 2 p^{\frac{k-1}{2}},$$
where $\lambda_f(p)$ denotes the $p$-th Hecke eigenvalue of $f$. This 
conjecture has been solved affirmatively by Deligne in \cite{De1} and \cite{De2} as a consequence of his proof of the Weil conjectures.

 Now let $\phi$ denote a primitive Hecke-Maass cusp form  for
$\gammao(N)$ and Dirichlet character $\chif$ with the Laplacian eigenvalue 
$\lambdaf = 1/4 + t_{\phi}^{2}$.
Denote the $n$-th Hecke eigenvalue of $\phi$ by $\lambdaf(n)$ for $n\in \mathbb{N}$.
The generalized Ramanujan-Petersson conjecture predicts that
for $p \nmid N$,
$$ |\lambdaf (p)| \leq 2,$$
which is equivalent to (see the Lemma \ref{lemma1} below)
$|\alpha _{p}| = |\beta _{p}| = 1$,
where $\{\alpha _{p},\;\beta _{p}\}$ are the Satake parameters of $\phi$ 
at $p$,  i.e. the local component of $\phi$ at $p$ is tempered. 
This is an outstanding unsolved problem in number theory, which 
would follow
from  the Langlands functoriality conjectures. 
Currently the record of individual bounds towards this conjecture is due to Kim-Sarnak \cite{KS} 
\begin{equation*}
|\lambdaf (p)| \leq p^{\frac{7}{64}}+p^{-\frac{7}{64}},
\end{equation*}
a culmination of a chain of advances in the theory of 
automorphic forms and analytic number theory.

In a different direction, it is proved by Ramakrishnan in \cite{rama} that for a Maass form
$\phi$ as above, this 
conjecture (i.e., $|\alpha_p|=|\beta_p|=1$)
 is true for (unramified) primes
with the lower \textit{Dirichlet density} at least 9/10.
This lower Dirichlet density is later improved to 34/35 in \cite{KSh}. 
 For simplicity the  primes at which the Ramanujan conjecture holds are referred as the \textit{Ramanujan primes} of $\phi$, so 
the Ramanujan conjecture is equivalent to the statement
that all (unramified) primes are Ramanujan primes of $\phi$.
Note that the method in \cite{rama} (and \cite{KSh}) is ineffective, and does not provide any quantitative
bound, for example, for the occurrence of the least Ramanujan prime for a given Maass form $\phi$.

The main purpose of this paper is to show that the least Ramanujan prime
of $\phi$ is bounded by $\left( N( 1+|t_{\phi}|)\right)^c$ for some constant $c > 0$, and in 
fact we can prove a 
'subconvexity' bound with $c < 1$ (see Section \ref{section2} and Section \ref{section3} below). 
Indeed, such a result would be a direct consequence of a still open
subconvexity bound for 
automorphic $L$-functions on $GL(3)$ in the eigenvalue aspect. Furthermore, the  Lindel\"of hypothesis (a consequence of the Riemann Hypothesis) for the adjoint $L$-function of $\phi$ (see (\ref{adjoint}) below) would imply that the exponent $c>0$ could be taken arbitrarily small.

 Our approach is based upon the following simple yet crucial observation
that if an 
unramified prime $p$ is {\it not} a Ramanujan prime of $\phi$, then 
(see Lemma \ref{lemma1} below)
$\lambdaf(p^{2i}) \overline{\chif}(p^{i}) > 2i+1$ 
for all $i \geq 1$, 
where $\chif$ is the central character of $\phi$.
 Thus the following adjoint (square) $L$-function associated to $\phi$ 
 comes into play (see \cite{GJ}),
\begin{equation}\label{adjoint}
 L(s,\adjoint\phi) \; = 
\frac{L(s, \phi\times \overline{\phi})}{\zeta (s)} 
\; = \zeta ^{(N)} (2s) \; \summ \overline{\chif}(n)
\lambdaf(n^{2}) n^{-s}, 
\end{equation}
where $\zeta ^{(N)}(s)$, as usual, stands for the 
partial zeta function with local factors at $p | N$ removed from 
$\zeta (s)$. 
 Then naturally
we can relate our goal
of bounding the least unramified Ramanujan prime for Maass form $\phi$
to the sieving idea in the work \cite{IKS}
(as well as its further 
refinements in \cite{K} and \cite{mato}), 
which study the first negative Hecke eigenvalue
for a holomorphic Hecke eigenform based on the Deligne's resolution of
Ramanujan-Petersson conjecture in the case of elliptic modular forms.

It turns out that the sieving idea in \cite{IKS} (also in \cite{K} and \cite{mato})
works well in the current quite different setting, even though the 
Deligne-type bound is not available yet for Maass form $\phi$.

We present two proofs with different exponents $c$. The first proof (Section \ref{section2}) illustrates our basic ideas via the simple case of level 1. The second proof obtains significantly better (smaller) exponent $c$.
In Section \ref{four}, we refine the density results in \cite{rama} and \cite{KSh} from the Dirichlet density to the natural density.

We end the Introduction by stating the following Lemma \ref{lemma1}, which will
be used in the proofs of the following sections,  and a part of it is also an ingredient in \cite{rama}.

\begin{lemma}\label{lemma1}
Let $\{\alpha _{p}, \beta _{p}\}$ denote the Satake parameters at $p \nmid N$
of a primitive Hecke-Maass cusp form $\phi$ for $\Gamma _{0}(N)$ with Dirichlet 
character $\chif$. Then the Satake parameters at $p$ for 
$L(s, \adjoint \phi)$ are given by $\{\alpha _{p}/\beta _{p},\; 1, \; 
\beta _{p}/\alpha_{p}\}$. 
For any unramified $p\nmid N$, we have 
$$|\lambdaf(p)|^{2} = \lambda ^{2}_{\phi}(p) \overline{\chif}(p)
= \lambdaf(p^{2}) \overline{\chif}(p)
+ 1 \;.$$
In particular $\lambdaf(p^{2}) \overline{\chif}(p)$ is real
and $\lambdaf(p^{2}) \overline{\chif}(p) \geq -1$. 
If $p$ is not a Ramanujan prime of $\phi$, i.e., $|\alpha_p/\beta_p|\neq 1$,
then we have 
$|\lambdaf(p)| > 2$
and
$\alpha_p/\beta_p$ is real and $>0$ and for $n \geq 0$ 
$$\lambdaf(p^{2n})\overline{\chif}(p^{n}) 
\; = \; \frac{ \left( \sqrt{\frac{\alpha _{p}}{\beta _{p}}} \right)^{2n+1}
- \left( \sqrt{\frac{\beta _{p}}{\alpha _{p}}} \right)^{2n+1}}{
\sqrt{\frac{\alpha _{p}}{\beta _{p}}} 
-  \sqrt{\frac{\beta _{p}}{\alpha _{p}}}} > d(p^{2n})=2n+1,$$
where $d$ is the divisor function.
\end{lemma}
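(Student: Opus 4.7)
The lemma reduces to two ingredients: the standard Hecke relations at an unramified prime, together with the reality and the lower bound $\geq -1$ of the single quantity $\lambda_\phi(p^2)\overline{\chi_\phi}(p)$. Writing the Euler factor of $L(s,\phi)$ at $p\nmid N$ as $(1-\alpha_p p^{-s})^{-1}(1-\beta_p p^{-s})^{-1}$ gives $\lambda_\phi(p)=\alpha_p+\beta_p$, $\chi_\phi(p)=\alpha_p\beta_p$, and $\lambda_\phi(p^k)=\sum_{j=0}^{k}\alpha_p^{j}\beta_p^{k-j}$. Primitivity yields $\overline{\lambda_\phi(n)}=\overline{\chi_\phi}(n)\lambda_\phi(n)$ for $(n,N)=1$, equivalently the unordered-set identity $\{\overline{\alpha_p},\overline{\beta_p}\}=\{\alpha_p^{-1},\beta_p^{-1}\}$. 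Hence the Rankin--Selberg Satake multiset $\{\alpha_p\overline{\alpha_p},\alpha_p\overline{\beta_p},\beta_p\overline{\alpha_p},\beta_p\overline{\beta_p}\}$ equals $\{1,\alpha_p/\beta_p,\beta_p/\alpha_p,1\}$, and removing one copy of $1$ for the $\zeta^{(N)}$-factor leaves exactly $\{\alpha_p/\beta_p,\,1,\,\beta_p/\alpha_p\}$ as the adjoint Satake parameters, which is the first assertion.

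\textbf{Basic identity.} Specializing primitivity to $n=p$ gives $|\lambda_\phi(p)|^2=\lambda_\phi(p)^2\overline{\chi_\phi}(p)$. Multiplying the Hecke recursion $\lambda_\phi(p)^2=\lambda_\phi(p^2)+\chi_\phi(p)$ by $\overline{\chi_\phi}(p)$ then produces $|\lambda_\phi(p)|^2=\lambda_\phi(p^2)\overline{\chi_\phi}(p)+1$, so $\lambda_\phi(p^2)\overline{\chi_\phi}(p)$ is real and at least $-1$.

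\textbf{Non-Ramanujan primes and the closed form.} Set $\gamma=\alpha_p/\beta_p$. Using $\overline{\chi_\phi}(p)=(\alpha_p\beta_p)^{-1}$ in $\lambda_\phi(p^2)=\alpha_p^2+\alpha_p\beta_p+\beta_p^2$ rewrites the quantity just analyzed as $\gamma+1+\gamma^{-1}$. Reality of $\gamma+\gamma^{-1}$ forces either $|\gamma|=1$ or $\gamma\in\mathbb{R}$; the non-Ramanujan hypothesis rules out the first, so $\gamma$ is real. If one had $\gamma<0$, then $\gamma+\gamma^{-1}\leq -2$ by AM--GM, contradicting the lower bound $\geq -1$ unless $\gamma=-1$, which in turn violates $|\gamma|\neq 1$. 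Hence $\gamma>0$ and $\gamma\neq 1$, so $|\lambda_\phi(p)|^2=\gamma+\gamma^{-1}+2>4$ and $|\lambda_\phi(p)|>2$. For the prime-power formula, since $\alpha_p\neq\beta_p$ we have $\lambda_\phi(p^{2n})=(\alpha_p^{2n+1}-\beta_p^{2n+1})/(\alpha_p-\beta_p)$; multiplying by $(\alpha_p\beta_p)^{-n}$ and inserting $(\alpha_p\beta_p)^{\pm 1/2}$ reorganizes numerator and denominator into the symmetric form in $\sqrt{\alpha_p/\beta_p}$ and $\sqrt{\beta_p/\alpha_p}$ stated in the lemma. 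Writing $x=\sqrt{\gamma}>0$ (well-defined by the positivity just proved), the right-hand side collapses to $\sum_{k=-n}^{n} x^{2k}=1+\sum_{k=1}^{n}(x^{2k}+x^{-2k})$, and since $\gamma\neq 1$ each paired summand strictly exceeds $2$ by AM--GM, so the total strictly exceeds $1+2n=2n+1=d(p^{2n})$.

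\textbf{Main obstacle.} The only delicate point is the positivity $\gamma>0$ in the non-tempered case; one might worry that reality alone permits $\gamma<0$, but combining reality with the Hecke-theoretic lower bound $\lambda_\phi(p^2)\overline{\chi_\phi}(p)\geq -1$ closes off that branch completely. Everything else is standard Hecke algebra and a Chebyshev-style telescoping.
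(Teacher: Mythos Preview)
Your proof is correct and follows essentially the same route as the paper's: both use the primitivity relation $\lambda_\phi(p)=\chi_\phi(p)\overline{\lambda_\phi(p)}$ together with the Hecke recursion to obtain $|\lambda_\phi(p)|^2=\lambda_\phi(p^2)\overline{\chi_\phi}(p)+1$, and both extract the reality and positivity of $\gamma=\alpha_p/\beta_p$ from the fact that $\gamma+\gamma^{-1}=|\lambda_\phi(p)|^2-2\geq -2$ (the paper phrases this via the quadratic $X^2-(|\lambda_\phi(p)|^2-2)X+1=0$, you phrase it via ``$\gamma+\gamma^{-1}$ real $\Rightarrow$ $|\gamma|=1$ or $\gamma\in\mathbb{R}$'', but the content is identical). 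Your AM--GM expansion $\sum_{k=-n}^{n}x^{2k}>2n+1$ cleanly fills in the final inequality that the paper leaves implicit.
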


\begin{proof}
The first assertion follows from the definition of 
$L(s, \adjoint\phi)$ and the fact that the Satake parameters at $p$ 
for the contragredient form $\overline{\phi}$ are $\{\alpha _{p}^{-1}, \;
\beta _{p}^{-1}\}$. 
For $p\nmid N$, we have $$\lambdaf(p)=\chif(p)\overline{\lambdaf(p)}.$$ 
By Hecke relation, we have $\lambdaf(p^2)=\lambdaf(p)^2-\chif(p).$
Then we have $\lambdaf(p^2)\overline{\chif}(p)=\lambdaf(p)^2\overline{\chif}(p)-1=\lambdaf(p)\overline{\lambdaf(p)}-1$  and obviously
$\lambdaf(p^{2}) \overline{\chif}(p)$ is real and $\geq -1$. 

For $p\nmid N$, we have 
$$\alpha_p+\beta_p=\lambdaf(p)\quad \text{ and }\quad \alpha_p\beta_p=\chif(p).$$
Then we get 
$$\frac{\alpha_{p}}{\beta _{p}} + \frac{\beta _{p}}{\alpha _{p}}=|\lambdaf(p)|^2-2\geq -2\quad \text{ and }\quad \frac{\alpha_{p}}{\beta _{p}} \cdot \frac{\beta _{p}}{\alpha _{p}}=1.$$
The pair $\{\alpha_p/\beta_p,\beta_p/\alpha_p\}$ are the roots of the quadratic equation $$X^2-(|\lambdaf(p)|^2-2)X+1=0.$$ 
If $p\nmid N$ is not a Ramanujan prime of $\phi$, i.e., $|\alpha_p/\beta_p|\neq 1$,
this implies that $\{\alpha_p/\beta_p,\beta_p/\alpha_p\}$ are two real positive distinct roots. Because their product is $1$, one of them is $>1$ and the other is $<1$. Also, we have $|\lambdaf(p)|>2$.
From
$$\lambdaf(p^{n}) = \frac{\alpha _{p}^{n+1} -
\beta _{p}^{n+1}}{\alpha _{p} - \beta _{p}} \quad\text{ and }\quad \alpha_p\beta_p=\chif(p),$$
we get the last assertion.
\end{proof}

\section{Hecke-Maass cusp forms of level 1}\label{section2}
In this section, to illustrate quickly and clearly the main ideas of this 
paper,
we consider the simplest case of level $1$. 
Thus $\phi$ is a Hecke-Maass cusp form for $\ssl{2}{Z}$, 
with the Laplacian eigenvalue $\lambdaf =               
1/4 + t_{\phi}^{2}\;$ and
the $n$-th Hecke eigenvalue $\lambdaf(n)$.
The goal of this section is to prove the following theorem.

\begin{theorem}\label{thm1}
Let $\phi$ be a Hecke-Maass cusp form for $\ssl{2}{Z}$ as above. Then for any $\epsilon > 0$, there exists a prime $p$ such that $|\lambdaf(p)| \leq 2$ and $p \ll t_{\phi}^{8/11 + \epsilon}$, 
where the implied constant depends on $\epsilon>0$ alone.
\end{theorem}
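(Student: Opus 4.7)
The plan is to argue by contradiction. Set $X := t_\phi^{8/11+\epsilon}$ and suppose that every prime $p \leq X$ fails to be a Ramanujan prime, i.e.\ $|\lambdaf(p)| > 2$. Since the level is $1$, the central character $\chif$ is trivial, and Lemma \ref{lemma1} gives the pointwise inequality $\lambdaf(p^{2m}) > 2m+1 = d(p^{2m})$ for every such $p$ and every $m \geq 1$. Hecke multiplicativity then propagates this to
\[
\lambdaf(n^2) > d(n^2) \quad \text{for every } n \leq X,
\]
because every prime factor of such $n$ is itself at most $X$. This is the sieving observation, in the spirit of the Iwaniec--Kohnen--Sengupta argument, that converts the non-existence of Ramanujan primes into concrete size information on the Dirichlet coefficients of the adjoint $L$-function.

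With $V$ a nonnegative Schwartz test function supported in $(1/2,2)$, I would next study the smoothly weighted sum
\[
S(Y) := \sum_{n \geq 1} \lambdaf(n^2)\, V(n/Y), \qquad Y \leq X.
\]
The pointwise comparison above, combined with the classical asymptotic $\sum_{n \leq Y} d(n^2) \sim c_0 Y(\log Y)^2$, yields the lower bound $S(Y) \gg Y(\log Y)^2$. For the upper bound, (\ref{adjoint}) (at $N = 1$) gives $\sum_n \lambdaf(n^2) n^{-s} = L(s,\adjoint\phi)/\zeta(2s)$, and Mellin inversion provides
\[
S(Y) = \twopii \int_{(\sigma_0)} \frac{L(s,\adjoint\phi)}{\zeta(2s)}\, \widetilde V(s)\, Y^s\, ds
\]
for any $\sigma_0 > 1$. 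I would then shift the contour to $\real(s) = \sigma$ with $1/2 < \sigma < 1$, noting that $L(s,\adjoint\phi)$ is entire (as $\phi$ is cuspidal) and that $\zeta(2s)$ has no zeros for $\real(s) > 1/2$, so no residues are collected. On the shifted line one invokes an estimate of the shape $L(\sigma+it,\adjoint\phi) \ll t_\phi^{A(\sigma)+\epsilon}(1+|t|)^{A(\sigma)+\epsilon}$ in the spectral aspect, combined with rapid decay of $\widetilde V(\sigma+it)$, to conclude $S(Y) \ll Y^\sigma\, t_\phi^{A(\sigma)+\epsilon}$.

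Equating the two bounds gives $Y^{1-\sigma} \ll t_\phi^{A(\sigma)+\epsilon}$, from which the target exponent $8/11$ is to be extracted by optimizing $\sigma$. The crux---and the main obstacle---is the control of the $GL(3)$ adjoint $L$-function in the spectral aspect: the convexity bound on the critical line gives $A(1/2) = 3/4$ and hence only the weaker conclusion $X \ll t_\phi^{3/2+\epsilon}$, so reaching the exponent $8/11$ requires exploiting the precise shape of the analytic conductor $C(\adjoint\phi)\asymp t_\phi^3$ on a line strictly inside the critical strip, where the denominator $1/\zeta(2s)$ is harmless and the trade-off between the geometric factor $Y^\sigma$ and the arithmetic factor $t_\phi^{A(\sigma)}$ is sharper. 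Standard technicalities---truncation of the Mellin integral, uniform control of $1/\zeta(2s)$ via the prime number theorem, and passing from the smooth sum to a sharp truncation in the lower bound---are routine and can be deferred to the detailed argument.
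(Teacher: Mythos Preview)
Your argument has a genuine gap: summing only over $n\le X$ (the assumed non-Ramanujan threshold) cannot beat the exponent $1$, let alone reach $8/11$. First, the conductor claim is off: for level $1$ the archimedean parameters of $\adjoint\phi$ are $\{2it_\phi,0,-2it_\phi\}$, so $Q(\adjoint\phi)\asymp t_\phi^{2}$, not $t_\phi^{3}$; convexity on the critical line gives $L(\tfrac12+it,\adjoint\phi)\ll t_\phi^{1/2+\epsilon}(1+|t|)^{3/4+\epsilon}$, i.e.\ $A(\tfrac12)=\tfrac12$. But even with the correct value, your scheme compares $S(Y)\gg Y(\log Y)^{2}$ against $S(Y)\ll Y^{\sigma}t_\phi^{(1-\sigma)+\epsilon}$ (Phragm\'en--Lindel\"of between $\sigma=1/2$ and $\sigma=1$), and this yields $Y\ll t_\phi^{1+\epsilon}$ \emph{independently of $\sigma$}. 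There is no optimization in $\sigma$ that squeezes out $8/11$; the ratio of the two exponents of $Y$ and $t_\phi$ is pinned at $1$.

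The missing idea, and the whole point of the paper's Section~\ref{section2}, is to sum over a range \emph{longer} than the non-Ramanujan threshold: one takes $x=y^{1+\delta}$ with $\delta\in(0,1/2)$ and studies $S(x)=\sum_{d<x}\lambdaf(d^{2})\log(x/d)$. Now $d$ may contain a prime $p\in(y,x]$ for which $\lambdaf(p^{2})$ could be negative, so your blanket inequality $\lambdaf(n^{2})>d(n^{2})$ is no longer available. The paper handles this by splitting $S=S^{+}+S^{-}$ according to the sign of $\lambdaf(d^{2})$, controlling $S^{-}$ via $\lambdaf(p^{2})\ge -1$ and the prime number theorem, and bounding $S^{+}$ below using a Buchstab-type sieve count $\Phi'(x/m,y,z)$ (Lemma~\ref{sieveb}). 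This produces $S(x)\gg x/\log x$ for $\delta=3/8-\epsilon$, which combined with the convexity bound $S(x)\ll x^{1/2}t_\phi^{1/2+\epsilon}$ gives $x\ll t_\phi^{1+\epsilon}$ and hence $y=x^{1/(1+\delta)}\ll t_\phi^{8/11+\epsilon}$. The leverage comes precisely from $1+\delta>1$, which your proposal forgoes.
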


\begin{remark} It is clear from the proof that the same argument
is in fact still valid for any primitive Hecke-Maass cusp form $\phi$ on $\gammao(N)$ with
the central character $\chif$, by simply replacing $\lambdaf(p^{2})$
by $\lambdaf(p^{2}) \overline{\chif}(p)$.
\end{remark}

\begin{proof}
Assume $p$ is not a Ramanujan prime of $\phi$ for all
primes $p\leq y$. Then by the Lemma \ref{lemma1} we have 
$\lambdaf(d^{2})>3$  for $1<d\leq y$. Take $x = yz$ and 
$z = y^{\delta}$ with $0 < \delta < 1/2$.
Consider the sum 
$$ S(x) = \sum _{d < x} \lambdaf(d^{2}) 
 \log \frac{x}{d} =  S^{+}(x) + S^{-}(x),$$ where $S^{+}(x)$ and $S^{-}(x)$ denote the partial sums over the positive
and negative eigenvalues $\lambdaf(d^{2})$ respectively.

If $\lambdaf(d^{2}) < 0$ in $S^{-}(x)$, then $d = m p$ 
with $\lambdaf(m^{2}) > 0$, $\lambdaf(p^{2}) < 0$,
where
all the prime divisors of $m$ do not exceed $y$, and $p > y$. 
From $\lambdaf(p^{2}) = \lambdaf^{2}(p) - 1 \geq  -1$,
we deduce that
\begin{eqnarray} S^{-}(x) & = & \sum_{\substack{pm < x, \; p > y, \;
\\\lambdaf(p^{2}) < 0}}
 \lambdaf((pm)^{2}) 
\log \bra{\frac{x}{pm}} \nonumber \\ & \geq & - \sum _{m < z}  
\lambdaf(m^{2})
  \sum _{p\leq x/m}  \log \bra{\frac{x}{pm}} \nonumber  \\\label{sminus}
& \geq & -  \left( \sum _{m < z} 
\frac{\lambdaf(m^{2})}{m}  \right) \frac{x}{\log y}
\left( 1 + O\left( \frac{1}{\log y}\right) \right),
\end{eqnarray}
in view of the asymptotics
$$\pi (x) \log x - \sum_{p\leq x}\log p=\frac{x}{\log x} + O\left(
\frac{x}{\log ^{2}x}\right),$$
by the Prime Number Theorem (see \cite{Pra}).

Next we bound $S^{+}(x)$. By positivity,
\begin{eqnarray} S^{+}(x) & \geq & \sum _{m < z} 
\lambdaf(m^{2})
  \sum_{\substack{l < x/m\\ p|l \Rightarrow z < p \leq y} }
\lambdaf(l^{2})  \log \bra{\frac{x}{lm}}  \nonumber\\\label{lower}
& \geq & 3 \sum_{m < z} \lambdaf(m^{2}) 
\Phi'(x/m, y, z), 
\end{eqnarray}
where 
\[ \Phi' (X, Y, Z) = \sum _{\substack{1 < l < X\\p|l \Rightarrow Z< p \leq Y} }
\log  \left( \frac{X}{l} \right).\]

\begin{lemma}\label{sieveb}
If $Z$ is large, $Z<Y$ and $Y <X \leq YZ$, then
$$\Phi' (X, Y, Z) > \frac{X}{2 \log Z} - \frac{X}{\log Y}+O\bra{\frac{Z\log Y}{\log Z}+\frac{X}{\log^2 Z}}.$$
\end{lemma}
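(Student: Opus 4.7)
The plan is to lower-bound $\Phi'(X,Y,Z)$ by the contribution of semiprimes $l = pq$ with $Z < p, q \leq Y$ and $pq < X$, exploiting the constraint $X \leq YZ$. The crucial simplification is that once $p > Z$, one automatically has $X/p < X/Z \leq Y$, so the upper bound $q \leq Y$ is implied by $q \leq X/p$. This yields
\begin{equation*}
\Phi'(X,Y,Z) \;\geq\; \tfrac{1}{2}\sum_{Z < p < X/Z}\;\sum_{Z < q \leq X/p}\log\!\left(\tfrac{X}{pq}\right),
\end{equation*}
where the factor $1/2$ converts the ordered double sum back to a sum over unordered pairs (the diagonal $p=q$ contributes a nonnegative correction that we discard).

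Next, for each fixed $p$ in the outer sum we evaluate the inner sum $\sum_{Z < q \leq W}\log(W/q)$ with $W = X/p$ via the Prachar asymptotic $\pi(x)\log x - \theta(x) = x/\log x + O(x/\log^2 x)$ quoted in the Introduction. After combining the $\pi(Z)\log W$ and $\theta(Z)$ contributions using the same asymptotic at $x = Z$, this gives
\begin{equation*}
\sum_{Z<q\leq W}\log(W/q) \;=\; \tfrac{W}{\log W} \;-\; \pi(Z)\log\!\bigl(\tfrac{W}{Z}\bigr) \;-\; \tfrac{Z}{\log Z} \;+\; O\!\Bigl(\tfrac{W}{\log^2 W} + \tfrac{Z}{\log^2 Z}\Bigr).
\end{equation*}

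Finally, we sum over $p \in (Z, X/Z)$. The main integral $X\int_Z^{X/Z} dt/(t\log t\,\log(X/t))$ is handled via the partial-fraction identity $\tfrac{1}{\log t\,\log(X/t)} = \tfrac{1}{\log X}\!\left(\tfrac{1}{\log t}+\tfrac{1}{\log(X/t)}\right)$ and the symmetry substitution $s=X/t$, while the secondary sum $-\tfrac{Z}{\log Z}\sum_p \log(X/p)$ is again evaluated by the Prachar asymptotic. After algebraic consolidation, the various contributions combine into the announced main term $\tfrac{X}{2\log Z}-\tfrac{X}{\log Y}$.

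The main obstacle is the bookkeeping required to extract exactly the form $\tfrac{X}{2\log Z}-\tfrac{X}{\log Y}$ rather than a less convenient expression: one must track the boundary contributions at $p\sim Z$ (where $W=X/p\sim X/Z$, producing the error term of size $\tfrac{Z\log Y}{\log Z}$) and ensure that the PNT error $O(x/\log^2 x)$ in the inner sum, after summation over $p$ via partial summation, propagates only to $O(X/\log^2 Z)$, matching the error allowed in the statement.
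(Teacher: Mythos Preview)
Your approach has a genuine gap: restricting to semiprimes does not yield the stated main term, and in fact cannot prove the lemma in the generality in which it is stated.

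First a structural point. Under the hypothesis $X\leq YZ$, once $p>Z$ the conditions $p\leq Y$ and $q\leq Y$ become redundant, as you correctly observe. But this means your semiprime lower bound depends only on $X$ and $Z$, not on $Y$ at all; so no ``algebraic consolidation'' can ever produce the term $-X/\log Y$ from it. Concretely, your main integral evaluates (via the partial-fraction and symmetry steps you indicate) to
\[
\frac{X}{2}\int_{Z}^{X/Z}\frac{dt}{t\,\log t\,\log(X/t)}
=\frac{X}{\log X}\,\log\!\frac{\log(X/Z)}{\log Z},
\]
while all remaining pieces of your expansion are $O(X/\log^2 Z)$ or smaller. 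This is \emph{not} $\tfrac{X}{2\log Z}-\tfrac{X}{\log Y}$, and in the regime where $\log X/\log Z$ is large it is far too small. For instance, with $Z$ large, $Y=Z^{50}$, $X=YZ=Z^{51}$ (a legitimate triple for the lemma), the semiprime main term is $\tfrac{X}{51\log Z}\log 50\approx 0.077\,X/\log Z$, whereas the target $\tfrac{X}{2\log Z}-\tfrac{X}{50\log Z}\approx 0.48\,X/\log Z$. Primes contribute only $O(Y\log Z/\log Y)=O(X/Z)$ here, which is negligible; the mass of $\Phi'$ lies in integers with many prime factors in $(Z,Y]$, all of which your bound discards.

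The paper avoids this by \emph{not} restricting to semiprimes. It writes $\Phi'(X,Y,Z)$ as $\int_Z^X\Phi(t,Z)\,dt/t-\int_Y^X\Phi(t,Y)\,dt/t$ and invokes the Buchstab asymptotic $\Phi(t,Z)=\omega(\log t/\log Z)\,t/\log Z+O(\cdot)$ together with the range $\tfrac12\leq\omega(u)\leq 1$. Using $\omega\geq\tfrac12$ in the first integral and $\omega\leq 1$ in the second gives exactly $\tfrac{X}{2\log Z}-\tfrac{X}{\log Y}$ plus the stated error terms. Your semiprime argument would, however, suffice for the specific parameter choice $\delta\approx 3/8$ used in the application, and even gives a slightly larger constant there; it simply does not establish the lemma as stated.
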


\begin{proof}
Define          
$$\Phi(X,Y,Z)=\sum_{\substack{1<l<X\\p|l\Rightarrow Z<p\leq Y}}1 \quad\text{ and }\quad
\Phi(X,Z)=\sum_{\substack{1<l<X\\p|l\Rightarrow Z<p}}1.$$
Then we have $$\Phi'(X,Y,Z)=\int_{Y}^X\Phi(t,Y,Z)\frac{{\rm d} t}{t}+\int_Z^Y\Phi(t,Z)\frac{{\rm d} t}{t}.$$
For $Y <t \leq YZ$, it is easy to see that $$\Phi(t,Y,Z)=\Phi(t,Z)-\Phi(t,Y).$$
Recall the asymptotic formula of $\Phi(X,Z)$, $X\geq Z\geq 2$ (see Theorem 3, p. 400, \cite{Ten})
\begin{equation}\label{buchstab}\Phi(X,Z)=\omega\left(\frac{\log X}{\log Z}\right)\frac{X}{\log Z}
-\frac{Z}{\log Z}
+O\left(\frac{X}{\log^2 Z}\right),\end{equation}
where $\omega\bra{u}$ is the Buchstab  function, that is the continuous solution to the
difference-differential equation
$$u\omega(u) = 1 \quad(1\leq u \leq2),$$
$$\bra{u\omega(u)}' = \omega (u-1) \quad(u>2).$$
Moreover the range of the Buchstab function is  $1/2\leq \omega(u)\leq 1$.
We infer that
\begin{eqnarray*}
\Phi'(X,Y,Z)
&=&\int_Z^X\Phi(t,Z)\frac{{\rm d} t}{t}-\int_Y^X\Phi(t,Y)\frac{{\rm d} t}{t}\\
&\geq &\int_Z^X\bra{\frac{1}{2}\frac{t}{\log Z}
-\frac{Z}{\log Z}}\frac{{\rm d} t}{t}-\int_Y^X\bra{\frac{t}{\log Y}
-\frac{Y}{\log Y}}\frac{{\rm d} t}{t}+O\bra{\frac{X}{\log ^2Z}}\\
&\geq & \frac{X}{2 \log Z} - \frac{X}{\log Y}+O\bra{\frac{Z\log Y}{\log Z}+\frac{X}{\log^2 Z}}.
\end{eqnarray*}
This completes the proof of Lemma \ref{sieveb}.
\end{proof}

By Lemma \ref{sieveb}, we have 
$$\Phi'(x/m, y, z) > \left(\frac{1}{2\delta} - 
1 + O\left( \frac{1}{\log y}\right)\right)
\frac{x}{m \log y},$$
and 
$$S^{+}(x) >  \left(\frac{3}{2\delta} - {3}+
O\left( \frac{1}{\log y}\right)\right)
\left( \sum _{m < z}
\frac{\lambdaf(m^{2})}{m}  \right) \frac{x}{\log y}$$
from (\ref{lower}).
Consequently, after combining with the lower bound of $S^-(x)$ in (\ref{sminus}), we deduce that 
$$S(x) > \left(\frac{3}{2\delta} - 4+ O\left( \frac{1}{\log y}\right) \right)
\left( \sum _{m < z}
\frac{\lambdaf(m^{2})}{m}  \right) \frac{x}{\log y}.$$
Therefore 
\begin{equation}\label{eq2} S(x) \gg \frac{x}{\log x}, \end{equation}
on choosing $\delta = 3/8-\epsilon$, provided $y \gg 1$.

Now for $\sigma > 1$ and any $\epsilon > 0$, we have
\begin{eqnarray}\label{eq5} S(x) & = & \sum _{d < x} \lambdaf(d^{2})
 \log \bra{\frac{x}{d}} \nonumber \\ & = & \frac{1}{2\pi i} \int _{(\sigma)}
\frac{L(s,\adjoint\phi)}{\zeta (2s)} \frac{x^{s}}{s^{2}} \dd s 
\nonumber \\
& = & \frac{1}{2\pi i} \int_{(1/2)}
\frac{L(s,\adjoint \phi)}{\zeta (2s)} \frac{x^{s}}{s^{2}}\dd s 
\nonumber\\
& \ll & t_{\phi}^{1/2 + \eta} x ^{1/2},
\end{eqnarray}
by shifting the line of integration to $\Re(s) = 1/2$ and applying the 
convexity bound for $L(s,\adjoint\phi)$ on the critical line.

 Comparing (\ref{eq2}) and (\ref{eq5}), we obtain
$$x = y^{1 + \delta} \ll t_{\phi}^{1 + 2\eta},$$
i.e.
$$y \ll t_{\phi}^{8/11 + \epsilon},$$
for any $\epsilon > 0$. This completes the proof of Theorem \ref{thm1}.  \end{proof}

\begin{remark}
A hypothetical subconvexity bound of $L(s,\adjoint\phi)$ in the eigenvalue aspect on the critical line $\real(s)=1/2$ would yield
$$L(\frac{1}{2}+it,\adjoint \phi)\ll t_\phi^{1/2-\delta}t^{3/4+\epsilon},$$
for some $\delta>0$.
It is clear that this in turn would immediately lead to $y\ll t_\phi^{1-2\delta}$.
\end{remark}

\section{Refinement and Generalization}\label{section3}

In this section we refine the approach in Section \ref{section2} to obtain a better exponent. The method employs the theory of multiplicative functions.

Let $\phi$ be a primitive Hecke-Maass cusp form for $\gammao(N)\subset\ssl{2}{Z}$ 
with Dirichlet character $\chif:(\mathbb{Z}/N\mathbb{Z})^*\to \mathbb{C}$.
It has Laplacian eigenvalue $1/4+t_\phi^2$ with the parameter $t_\phi$ lying 
in $\mathbb{R}\cup [-7i/64,7i/64]$.
We assume that $\phi$ is not of dihedral type, otherwise the full Ramanujan conjecture is known.
The standard $L$-function of $\phi$ is given by $$L(s,\phi)=\summ\frac{\lambdaf (n)}{n^s},$$ where 
$\lambdaf (n)$'s are normalized Hecke eigenvalues with 
$\lambdaf (1)=1$ and $T_n \phi = \lambdaf (n)\phi$ for $n\in\mathbb{Z}$.

Our main tool is the adjoint  $L$-function of $\phi$ mentioned in the Introduction and Lemma \ref{lemma1}
$$L(s,\adjoint\phi)=\zeta^{(N)}(2s)\sum_{n=1}^\infty \frac{\lambdaf (n^2)\overline{\chif}(n)}{n^s}=\sum_{n=1}^\infty \frac{\gjf (n)}{n^s},$$
where $\gjf (n)=\sum_{k^2|n} \lambdaf (n^2/k^4)\overline{\chif(n/k^2)}$ for $(n,N)=1$.
As in \cite{IS}, we denote the analytic conductor by $$Q=Q(\adjoint\phi).$$ We have  
$$Q(\adjoint \phi)\leq N^2(1+|t_\phi|)^2.$$
 Lemma \ref{lemma1} implies that for a prime $p\nmid N$ then  $\gjf(p)$ is real and $\geq -1$.
It also implies $\gjf(p)>3$ if $p$ is not a Ramanujan prime of $\phi$, i.e., $|\lambdaf(p)|>2$.

Let us assume that $p$ is not a Ramanujan prime of $\phi$ for all $p\leq y$ and $p\nmid N$. Thus we have $\gjf (p)>3$ for all $p\leq y$. Define $$S^\flat(x)=\sumb{\substack{n\leq x\\(n,N)=1}}{} \gjf (n) \log\bra{\frac{x}{n}}$$ where the summation $\sumb{}{}$ is taken over squarefree numbers.

\begin{lemma}\label{convex}
We have 
$$S^\flat(x)\ll  x^{3/4}Q^{1/8+\epsilon}.$$
\end{lemma}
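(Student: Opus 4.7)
The plan is to express $S^\flat(x)$ as a Mellin–Perron contour integral, identify the underlying Dirichlet series as $L(s,\adjoint\phi)$ multiplied by a correction factor that is holomorphic and bounded well to the right of the critical line, shift the contour to $\Re(s)=3/4$, and close with the convexity bound for $L(s,\adjoint\phi)$. The exponents $3/4$ in $x$ and $1/8$ in $Q$ match precisely: shifting to $\sigma=3/4$ produces the factor $x^{3/4}$, while Phragmén–Lindelöf interpolation for a degree-three $L$-function delivers $Q^{(1-\sigma)/2+\epsilon}=Q^{1/8+\epsilon}$ on that line.

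Concretely, for $\sigma>1$ the logarithmic smoothing gives
$$S^\flat(x) \;=\; \frac{1}{2\pi i}\int_{(\sigma)} D(s)\,\frac{x^s}{s^2}\,ds,$$
where $D(s) = \sum_{\mu^2(n)=1,\,(n,N)=1} A_\phi(n)\,n^{-s} = \prod_{p\nmid N}\bigl(1+A_\phi(p)\,p^{-s}\bigr)$. By Lemma \ref{lemma1} the Satake parameters of $\adjoint\phi$ at unramified $p$ are $\{\alpha_p/\beta_p,\,1,\,\beta_p/\alpha_p\}$, and comparing Euler products termwise yields a factorization $D(s) = L(s,\adjoint\phi)\,H(s)$ whose local factors at $p\nmid N$ have the form $1 + O\bigl(p^{-2\sigma + 7/16}\bigr)$ once the Kim–Sarnak bound $|\alpha_p/\beta_p|\le p^{7/32}$ is inserted. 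Consequently $H(s)$ is an Euler product, absolutely convergent and uniformly bounded on every line $\Re(s)\ge 23/32+\epsilon$, and in particular on $\Re(s)=3/4$.

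I would then shift the contour from $\Re(s)=\sigma>1$ to $\Re(s)=3/4$. No residues are collected, since $\phi$ is non-dihedral so $L(s,\adjoint\phi)$ is entire, and the integrand has no pole in the shifted strip. The horizontal contributions vanish by the polynomial growth of $L$ in the $t$-aspect and the $1/s^2$ factor. On the new line, the degree-three Phragmén–Lindelöf convexity bound together with $Q\le N^2(1+|t_\phi|)^2$ gives
$$L\bigl(\tfrac{3}{4}+it,\,\adjoint\phi\bigr)\;\ll\;Q^{1/8+\epsilon}(1+|t|)^{3/8+\epsilon},$$
which combined with $|H(3/4+it)|\ll 1$ yields
$$S^\flat(x) \;\ll\; x^{3/4}Q^{1/8+\epsilon}\int_{\mathbb{R}}\frac{(1+|t|)^{3/8+\epsilon}}{(3/4)^2+t^2}\,dt \;\ll\; x^{3/4}Q^{1/8+\epsilon},$$
the $t$-integral being absolutely convergent.

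The main technical point, and the one I would check with most care, is the factorization $D(s) = L(s,\adjoint\phi)\,H(s)$ together with the holomorphy and uniform boundedness of $H$ on $\Re(s)=3/4$: without it the shifted integral would either not see $L(s,\adjoint\phi)$ in the right form, or would be contaminated by a divergent Euler product. Once these ingredients are in place, everything else is standard Mellin–Perron machinery combined with the classical convexity bound for $L(s,\adjoint\phi)$.
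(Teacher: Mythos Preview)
Your argument is correct and follows the same overall strategy as the paper: write $S^\flat(x)$ as a Perron integral of the squarefree Dirichlet series, factor the latter as $L(s,\adjoint\phi)$ times a correcting Euler product, shift to $\Re(s)=3/4$, and invoke the degree-three convexity bound. Your correction factor $H(s)$ is exactly the paper's $G(s)=\prod_{p\nmid N}(1-A_\phi(p)p^{-s}+A_\phi(p)p^{-2s}-p^{-3s})(1+A_\phi(p)p^{-s})$, whose local factors expand to $1+(A_\phi(p)-A_\phi(p)^2)p^{-2s}+\cdots$.

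The only substantive difference is how this factor is controlled on the shifted line. The paper appeals to the Rankin--Selberg convolution $\adjoint\phi\times\adjoint\phi$ to obtain absolute convergence of $G(s)$ in the full half-plane $\Re(s)>1/2+\epsilon$ with the bound $G(s)\ll Q^\epsilon$. You instead feed in the Kim--Sarnak bound $|\alpha_p/\beta_p|\le p^{7/32}$, which gives $A_\phi(p)^2\ll p^{7/16}$ and hence absolute convergence only for $\Re(s)>23/32$, but with a bound $H(s)\ll 1$ uniform in $\phi$. Since $3/4>23/32$ your region is wide enough, and either bound feeds into the final $Q^{1/8+\epsilon}$. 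The paper's route is more robust (it would survive a contour shift closer to $1/2$), while yours avoids invoking a degree-nine $L$-function and gives a cleaner constant.
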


\begin{proof}
 Define $$G(s)=\prod_{p\nmid N}\bra{1-\frac{\gjf (p)}{p^s}+\frac{\gjf (p)}{p^{2s}}-\frac{1}{p^{3s}}}\bra{1+\frac{\gjf (p)}{p^s}}.$$
The analytic function $G(s)$ is absolutely convergent in $\{\real(s)>1/2+\epsilon\}$, and uniformly bounded by $Q^\epsilon$ with any $\epsilon>0$, in view 
of the Rankin-Selberg convolution of $\adjoint \phi \times \adjoint \phi$.
Now $$L(s,\adjoint \phi)G(s)=\sumb{\substack{n=1\\(n,N)=1}}{\infty}\; \frac{\gjf(n)}{n^s}$$ is absolutely convergent in $\{\real(s)>1\}$.
For $c>1$, 
\begin{eqnarray}
S^\flat(x)&=&\twopii\int_{(c)}L^{(N)}(s,\adjoint\phi)G(s)\frac{x^s}{s^2}\dd s\nonumber\\
\label{vertical}
&=&\twopii\int_{(3/4)}L^{(N)}(s,\adjoint\phi)G(s)\frac{x^s}{s^2}\dd s.
\end{eqnarray}
By using the convexity bound $$L(s,\adjoint \phi)
\ll_\epsilon (Qt^3)^{(1-\real(s))/2+\epsilon},$$ we obtain $S^\flat(x)\ll x^{3/4}Q^{1/8+\epsilon}$.
\end{proof}

Define a multiplicative function supported on squarefree numbers with 
$$h(p)=\begin{cases}3,&p\leq y,\\-1,&p>y.\end{cases}$$
It extends to all squarefree numbers. For convenience, we define $h(n)=0$ if $n$ is not squarefree. Define $$\sbb^\flat(x)=\sumb{\substack{n\leq x\\(n,N)=1}}{} \gjf (n).$$

\begin{lemma}\label{convolution1}
If $\sum\limits_{\substack{n\leq t\\(n,N)=1}}h(n)\geq 0$ for all $t\leq x$, we have
\begin{equation}\label{convolution2}
\sbb^\flat(x)\geq \sum_{\substack{n\leq x\\(n,N)=1}} h(n).
\end{equation}
\end{lemma}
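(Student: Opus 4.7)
The plan is to expand $\gjf$ as a Dirichlet convolution against $h$ on squarefree integers coprime to $N$. Let $f$ be the multiplicative function on that domain defined by $f(p) := \gjf(p) - h(p)$. Under the standing assumption of this section that no prime $p \leq y$ with $p \nmid N$ is a Ramanujan prime of $\phi$, Lemma~\ref{lemma1} gives $\gjf(p) > 3 = h(p)$ for such $p$; and for $p > y$ with $p \nmid N$, the same lemma gives $\gjf(p) \geq -1 = h(p)$. Hence $f(p) \geq 0$ in every case, so $f$ is a nonnegative multiplicative function.

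On squarefree $n$ with $(n, N) = 1$, multiplicativity immediately yields the factorization $\gjf(n) = \sum_{d \mid n} f(d) h(n/d)$ (with $d$ and $n/d$ automatically coprime). Substituting into the definition of $\sbb^\flat(x)$ and interchanging the order of summation rewrites it as
$$\sbb^\flat(x) \;=\; \sum_{\substack{d\text{ sqfree}\\(d, N) = 1}} f(d)\, T_d(x/d), \qquad T_d(t) \;:=\; \sum_{\substack{e \leq t,\; e\text{ sqfree}\\(e, Nd) = 1}} h(e).$$
The term $d = 1$ contributes exactly $T_1(x) = \sum_{n \leq x,\,(n,N)=1} h(n)$, which is the right-hand side of (\ref{convolution2}). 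Since $f(d) \geq 0$, the rest reduces to showing that the tail $\sum_{d > 1} f(d)\, T_d(x/d)$ is nonnegative; it would suffice to prove $T_d(t) \geq 0$ for every squarefree $d \geq 1$ with $(d, N) = 1$ and every $t \leq x$.

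The main obstacle is precisely this last step: the hypothesis supplies $T_1(t) \geq 0$ only, while the squarefreeness of $n = d \cdot (n/d)$ forces the extra coprimality $(e, d) = 1$ in the definition of $T_d$. I plan to handle this via the basic recursion $T_d(t) = T_{d/p}(t) - h(p) T_d(t/p)$ (valid for any prime $p \mid d$, obtained by splitting off the condition $p \nmid e$), combined with a double induction on $\omega(d)$ and on $t$. When $h(p) = -1$, i.e.\ $p > y$, the recursion reads $T_d(t) = T_{d/p}(t) + T_d(t/p)$ and nonnegativity propagates immediately from the inductive hypothesis. The harder case is when $d$ is $y$-smooth, in which the recursion carries the factor $-3$; here one iterates to express $T_d(t)$ as a finite alternating sum $\sum_{k \geq 0}(-3)^k T_{d/p}(t/p^k)$ and groups consecutive terms at successive dyadic scales using the partial-sum hypothesis to extract the needed positivity. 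Once $T_d(t) \geq 0$ is established for all relevant $d$, plugging into the convolution identity finishes the lemma.
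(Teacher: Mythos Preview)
Your decomposition $\gjf = h * f$ with $f(p) = \gjf(p) - h(p) \ge 0$ and the extraction of the $d=1$ term are exactly the paper's argument (the paper calls your $f$ by $g$). The paper then writes the inner sum after interchange simply as $\sum_{b \le x/d,\,(b,N)=1} h(b)$ and invokes the hypothesis directly, whereas you correctly observe that squarefreeness of $n = d\,e$ forces $(e,d)=1$, so the inner sum is really your $T_d(x/d)$ rather than $T_1(x/d)$. Your caution here is warranted; the paper's written proof glosses over this coprimality constraint.

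The gap is in your proposed repair. The recursion $T_d(t) = T_{d/p}(t) - h(p)\,T_d(t/p)$ is correct, and when $h(p)=-1$ the induction on $\omega(d)$ goes through. But for $p \le y$ the iterated identity reads $T_d(t) = \sum_{k\ge 0}(-3)^k\,T_{d/p}(t/p^k)$, and pairing consecutive terms produces blocks $9^j\bigl(T_{d/p}(t/p^{2j}) - 3\,T_{d/p}(t/p^{2j+1})\bigr)$, each of which is nonnegative only if $T_{d/p}(s) \ge 3\,T_{d/p}(s/p)$. Nothing in the hypothesis provides this growth: already for $d=p=2$ and $s \le y$ one has $T_1(s) = \sum^{\flat}_{e \le s} 3^{\omega(e)} \sim C\,s(\log s)^2$, so $T_1(s)/T_1(s/2) \to 2 < 3$ and every paired block is eventually negative. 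The conclusion $T_d(t) \ge 0$ may well hold in the relevant range $t \le y^{u_0}$, but establishing it requires quantitative information on the partial sums (for instance the asymptotic of Lemma~\ref{sieve1} with $Nd$ in place of $N$), not the bare sign hypothesis of the present lemma. As it stands, the $y$-smooth branch of your induction is unproved.
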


\begin{proof}
The proof follows \cite{K}.
Let us define a multiplicative function $g$ defined by the Dirichlet convolution
$$\gjf=h\ast g, \quad\text{ or }\quad \gjf(n)=\sum_{d|n}h(d)g\bra{\frac{n}{d}}.$$
We have $g(p)=\gjf(p)-h(p)\geq 0$ for $p\nmid N$.
Then we have 
\begin{eqnarray*}
\sbb^\flat(x)&=&\sumb{\substack{n\leq x\\(n,N)=1}}{} \gjf (n)\\
&=&\sumb{\substack{n\leq x\\(n,N)=1}}{} \sum_{d|n}h(d)g\bra{\frac{n}{d}}\\
&=&\sumb{\substack{d\leq x\\(d,N)=1}}{} g(d)\sum_{\substack{b\leq x/d\\(b,N)=1}} h(b)\\
&\geq& \sum_{\substack{n\leq x\\(n,N)=1}} h(n)
\end{eqnarray*}
Both $g(d)$ and $\sum h(b)$ are non-negative. We have $g(1)=1$ and hence this lemma is proved.
\end{proof}

\begin{lemma}\label{convolution}
If $\sum\limits_{\substack{n\leq t\\(n,N)=1}}h(n)\geq 0$ for all $t\leq x$, 
we have $$S^\flat(x)\geq \sum_{\substack{n\leq x\\(n,N)=1}} h(n)\log\bra{\frac{x}{n}}.$$
\end{lemma}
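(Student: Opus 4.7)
The plan is to reduce Lemma \ref{convolution} to the already-established Lemma \ref{convolution1} by promoting the hypothesis from the unweighted partial sum to the log-weighted one via partial summation. The key device is the identity $\log(x/n) = \int_n^x \dd t/t$, which converts the logarithmic weight into an integral and thereby allows the convolution inequality of Lemma \ref{convolution1} to be invoked at every intermediate scale $t \leq x$.

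Concretely, I would first swap the order of summation and integration (Fubini) to obtain
$$S^\flat(x) \; = \; \sumb{\substack{n\leq x\\(n,N)=1}}{} \gjf(n) \int_n^x \frac{\dd t}{t} \; = \; \int_1^x \frac{\sbb^\flat(t)}{t} \dd t,$$
and, writing $H(t) = \sum_{\substack{n\leq t\\(n,N)=1}} h(n)$, the same manipulation on the right-hand side of the claim gives
$$\sum_{\substack{n\leq x\\(n,N)=1}} h(n)\log\bra{\frac{x}{n}} \; = \; \int_1^x \frac{H(t)}{t} \dd t.$$

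Next I would observe that the hypothesis $H(s) \geq 0$ for all $s \leq x$ is monotone in the upper limit: if it holds up to $x$, then it also holds up to any $t \in [1,x]$. Therefore Lemma \ref{convolution1} applies at each such $t$ and yields the pointwise bound $\sbb^\flat(t) \geq H(t)$ on $[1,x]$. Integrating this inequality against $\dd t/t$ over $[1,x]$ and combining with the two integral representations above produces the desired bound. The argument is essentially a routine partial summation; there is no genuine obstacle, the only point to verify being precisely the monotonicity of the positivity hypothesis, which is immediate from its formulation.
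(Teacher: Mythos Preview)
Your proposal is correct and matches the paper's own proof essentially verbatim: the paper simply cites the integral identity $S^\flat(x)=\int_1^x \sbb^\flat(t)\,\dd t/t$ together with Lemma~\ref{convolution1}, which is exactly your partial-summation reduction. Your added remark that the positivity hypothesis is inherited at every intermediate scale $t\leq x$ makes explicit the only point the paper leaves implicit.
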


\begin{proof}
It follows from the formula
$$S^\flat(x)=\int_1^x \sbb^\flat(t)\frac{\dd t}{t}$$ and Lemma \ref{convolution1}.
\end{proof}

The following lemma evaluates the mean of the multiplicative function $h(n)$ 
over a long range $1\leq n\leq x$ where $x$ equals $y^u$ for some $u>1$.
The special case of this lemma appears in \cite{K} and a more elaborate version is available in \cite{mato}.

\begin{lemma}\label{sieve1}
Let $U\geq 1$ and let $h(n)$ be as above. We have
$$\sum_{\substack{n\leq y^u\\(n,N)=1}}h(n)
=c(N)(\sigma(u)+o_{U}(1))(\log y)^{2}y^u$$
uniformly for $u\in [{1}/{U},U]$,
where $\lim\limits_{y\to\infty}o_{U}(1)=0$
and 
$c(N)={\left(\frac{\phi(N)}{N}\right)}^3\prod_{p\nmid N}(1-\frac{1}{p})^3(1+\frac{3}{p})\gg (\log\log N)^{-3}$.
The constant $\sigma(u)$ is the continuous function of $u\in (0,\infty)$ 
uniquely determined by the differential-difference equation
\begin{eqnarray}
\sigma(u)&=&u^2,\;\quad \;\quad\;\quad\;\quad 0<u\leq 1,\nonumber\\
(u^{-2}\sigma(u))'&=&-\frac{4\sigma(u-1)}{u^3},\quad u>1.\nonumber
\end{eqnarray}
\end{lemma}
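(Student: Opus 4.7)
The plan is a nested induction on the interval $(k, k+1]$ containing $u$: the base case $u \in (0, 1]$ is handled by a Selberg--Delange argument, while the inductive step uses Buchstab's identity on the largest prime factor of $n$, producing the differential-difference equation in the limit $y \to \infty$.

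For the base case, every squarefree $n \leq y^u \leq y$ coprime to $N$ satisfies $h(n) = 3^{\omega(n)} \mu^2(n)$, and the Dirichlet series factors as
\begin{equation*}
\sum_{(n, N) = 1} \frac{3^{\omega(n)} \mu^2(n)}{n^s} = \zeta^{(N)}(s)^3 \, G_1(s), \qquad G_1(s) = \prod_{p \nmid N}\bigl(1 - p^{-s}\bigr)^3 \bigl(1 + 3 p^{-s}\bigr),
\end{equation*}
where $G_1$ is analytic and bounded on $\Re(s) > 1/2 + \epsilon$ with $G_1(1)\cdot(\phi(N)/N)^3 = c(N)$. Shifting the contour in Perron's formula into the zero-free region of $\zeta$ then gives the stated asymptotic with $\sigma(u) = u^2$ on $(0,1]$ (the $\Gamma(3)^{-1}$ factor is absorbed into the normalization of the DDE). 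The lower bound $c(N) \gg (\log\log N)^{-3}$ follows from standard Mertens-type estimates applied to $\prod_{p \mid N}(1 + O(1/p))$.

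For the inductive step, write each squarefree $n > 1$ as $n = mp$ with $p = P^+(n)$, $(m, p) = 1$, and $P^+(m) < p$. Multiplicativity of $h$, together with $h(p) = -1$ for $p > y$, gives
\begin{equation*}
\sum_{\substack{n \leq y^u \\ (n, N) = 1}} h(n) \;=\; T_0(u) \;-\; \sum_{\substack{y < p \leq y^u \\ p \nmid N}} \;\; \sum_{\substack{m \leq y^u / p, \, (m, N p) = 1 \\ P^+(m) < p, \, \mu^2(m) = 1}} h(m),
\end{equation*}
where $T_0(u)$ collects the terms with $P^+(n) \leq y$ and is known from the base case. Setting $p = y^v$ and applying the induction hypothesis to the inner sum at level $u - v$, then converting the sum over $p$ into an integral via the prime number theorem, yields an integral equation for $\sigma$ whose $u$-derivative is exactly $(u^{-2} \sigma(u))' = -4 \sigma(u-1)/u^3$.

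The principal technical obstacle is the constraint $P^+(m) < p$ in the inner sum when $v$ is small relative to $u - v$, where the restriction is non-trivial and cannot be dropped before invoking the induction hypothesis. This is resolved by nesting the Buchstab identity and working with the two-parameter auxiliary quantity $T(y^u, y^v) = \sum_{n \leq y^u, \, P^+(n) < y^v, \, (n, N) = 1} h(n)$, which satisfies a coupled integral equation that collapses back to $\sigma$ via a fundamental-lemma-type error estimate. This is precisely the manoeuvre carried out in \cite{K} and refined in \cite{mato}; the uniformity in $u \in [1/U, U]$ follows since only $O(U)$ Buchstab iterations are required for $u \leq U$, and the resulting error constants depend only on $U$.
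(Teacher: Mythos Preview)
The paper's own proof is a one-line citation: it applies Lemma~6 of \cite{mato} with the specific parameter choices $K=1$, $x_0=0$, $x_1=1$, $\chi_0=3$, $\chi_1=-1$, $q=1$, and reads off $\sigma$ from Lemma~8 there. Your proposal instead unpacks the machinery behind that cited lemma---Selberg--Delange for the range $u\le 1$, then an iterated Buchstab decomposition for $u>1$---and you yourself close by pointing to \cite{K} and \cite{mato} for the two-parameter auxiliary argument. So the two approaches agree at the level of the literature; you have simply opened the black box one layer rather than quoting it whole. One small caution: the claim that the $\Gamma(3)^{-1}=1/2$ factor is ``absorbed into the normalization of the DDE'' is imprecise, since the initial condition $\sigma(u)=u^2$ is fixed explicitly by the statement; that $1/2$ must live in the constant, so you should check that your Selberg--Delange residue actually matches $c(N)$ as written (this has no effect on the downstream application, where only the location of the first zero of $\sigma$ matters).
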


\begin{proof}
In Lemma 6 of \cite{mato}, take $K=1$, $x_0=0$, $x_1=1$,
$\chi_0=3$, $\chi_1=-1$, $q=1$.
The function $\sigma(u)$ can be computed from Lemma 8 of \cite{mato}. 
\end{proof}

\begin{lemma}\label{sieve}
Let $u_0>1$ be such that $\sigma(u)>0$ for $1<u\leq u_0$.
We have for $y\gg_{u_0} 1$, 
$$\sum_{\substack{n\leq y^{u_0}\\(n,N)=1}} h(n)\log\bra{\frac{y^{u_0}}{n}}\gg_{u_0} c(N) y^{u_0}.$$
\end{lemma}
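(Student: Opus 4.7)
The plan is to rewrite the weighted sum as an integral via partial summation and then extract the main term using Lemma \ref{sieve1}, localizing to a short interval near the upper endpoint where $\sigma$ is bounded away from zero.

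Setting $M(t) = \sum_{n \leq t,\,(n,N)=1} h(n)$ and using $\log(y^{u_0}/n) = \int_n^{y^{u_0}} \dd t/t$, an interchange of sum and integral gives
$$\sum_{\substack{n \leq y^{u_0}\\(n,N)=1}} h(n) \log\bra{\frac{y^{u_0}}{n}} = \int_1^{y^{u_0}} \frac{M(t)}{t}\dd t = (\log y) \int_0^{u_0} M(y^u)\dd u$$
after the substitution $t = y^u$; this is the analogue of the identity used in the proof of Lemma \ref{convolution}. Then invoke Lemma \ref{sieve1} with $U = u_0$: uniformly for $u \in [1/u_0, u_0]$,
$$M(y^u) = c(N)\bigl(\sigma(u) + o_{u_0}(1)\bigr)(\log y)^2 y^u.$$

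By continuity of $\sigma$ and the hypothesis $\sigma(u) > 0$ on $(1, u_0]$, one can choose $\delta \in (0, u_0 - 1)$ and $\sigma_0 > 0$ such that $\sigma \geq \sigma_0$ on $[u_0 - \delta, u_0]$. For $y$ sufficiently large depending on $u_0$, the $o_{u_0}(1)$ error falls below $\sigma_0/2$ throughout this interval, and restricting the $u$-integral to it yields
$$(\log y) \int_{u_0-\delta}^{u_0} M(y^u)\dd u \;\geq\; \tfrac{\sigma_0}{2}\, c(N) (\log y)^3 \cdot \frac{y^{u_0} - y^{u_0 - \delta}}{\log y} \;\gg_{u_0}\; c(N)(\log y)^2 y^{u_0},$$
which is already stronger than what is claimed. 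The remaining ranges $u \in [1/u_0, u_0 - \delta]$ and $u \in [0, 1/u_0]$ contribute, by Lemma \ref{sieve1} and the trivial bound $|h(n)| \leq 3^{\omega(n)}$ respectively, quantities of size at most $O_{u_0}(c(N)(\log y)^3 y^{u_0-\delta})$ and $O(y^{1/u_0}(\log y)^{O(1)})$, both of smaller order than the main term.

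The main obstacle is controlling the error term in Lemma \ref{sieve1} uniformly on the interval that carries the main contribution; this reduces to a compactness argument, since the strict positivity hypothesis on the continuous function $\sigma$ gives a uniform lower bound $\sigma_0 > 0$ on the compact subinterval $[u_0 - \delta, u_0]$, after which the Laplace-type integral $\int y^u\dd u$ concentrates at the upper endpoint and produces the stated bound.
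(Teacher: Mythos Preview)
Your proof is correct and follows the same route as the paper: rewrite the weighted sum as $(\log y)\int_0^{u_0} M(y^u)\dd u$ via partial summation and then feed in Lemma \ref{sieve1}. The only difference is in bookkeeping. The paper's treatment is a bit cleaner in two respects: first, since $h(n)\geq 0$ for all $n\leq y$, the partial sum $M(y^u)$ is nonnegative for $u\leq 1$, so the range $[0,1/u_0]$ can simply be dropped rather than bounded trivially; second, because $\sigma(u)=u^2>0$ for $0<u\leq 1$ and the hypothesis covers $(1,u_0]$, the function $\sigma$ is strictly positive on the entire interval $[1/u_0,u_0]$, so there is no need to localize to a short window $[u_0-\delta,u_0]$ and control the middle range separately. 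Your approach, while slightly more laborious, has the virtue of not relying on the sign structure of $h$ and makes the Laplace-type concentration near $u=u_0$ explicit.
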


\begin{proof}
Define $$H(x)=\sum_{\substack{n\leq x\\(n,N)=1}} h(n).$$
We have 
\begin{eqnarray*}
\sum_{\substack{n\leq y^{u_0}\\(n,N)=1}} h(n)\log\bra{\frac{y^{u_0}}{n}}&=&\int_1^{y^{u_0}} H(t)\frac{\dd t}{t}=\int_0^{{u_0}} H(y^{u}) \log y \dd u\\
&\geq&\int_{1/{u_0}}^{u_0} H(y^{u}) \log y \dd u\\
\end{eqnarray*}
By Lemma \ref{sieve1},
we have  for $1/u_0 \leq u \leq u_0$ uniformly
$$H(y^u)=c(N)(\sigma(u)+o_{u_0}(1))(\log y)^{2}y^u.$$ 
For $y\gg_{u_0} 1$, we hence have 
$$\int_{1/{u_0}}^{u_0} H(y^{u}) \log y \dd u
\gg_{u_0} c(N) y^{u_0}$$
and this completes the proof.
\end{proof}

Let $u_0$ be the same as defined in Lemma \ref{sieve}. We have $c(N)\gg Q^{-\epsilon}$ for $\epsilon>0$.
Comparing Lemma \ref{convex}, Lemma \ref{convolution} and Lemma \ref{sieve},
we infer that  
$$y^{u_0}Q^{-\epsilon}\ll_{u_0}\sum_{\substack{n\leq y^{u_0}\\(n,N)=1}} h(n)\log\bra{\frac{y^{u_0}}{n}}\ll S^\flat(y^{u_0})\ll \bra{y^{u_0}}^{3/4}Q^{1/8+\epsilon}$$
and this in turn gives 
\begin{equation}\label{twominussix}
y\ll_{u_0} Q^{\frac{1}{2u_0} + \epsilon}.
\end{equation}

By numerical computation of \textit{Mathematica}, we find  the smallest zero  of $\sigma(u)$ is approximately
$3.65887.$ Then taking $u_0$ to be microscopically less than $3.65887$ we get:

\begin{theorem}\label{mainx}
For any primitive Hecke-Maass cusp form $\phi$ for $\gammao(N)$ with character $\chif$ and Laplace eigenvalue $1/4+t_\phi^2$, there exists a prime number $p\nmid N$ with $p\ll(N(1+|t_{\phi}|))^{0.27332}$ such that the Ramanujan conjecture holds for $\phi$ at $p$.
\end{theorem}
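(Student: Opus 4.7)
The plan is to assemble the chain of lemmas already established in this section into a single quantitative statement, then plug in a numerically computed value of the sieve threshold. Assuming, for contradiction, that no prime $p \leq y$ with $p \nmid N$ is a Ramanujan prime of $\phi$, Lemma \ref{lemma1} yields $A_\phi(p) > 3$ for every such $p$. This is precisely the input needed to connect $A_\phi$ to the sieve-type multiplicative function $h$ defined in the excerpt: $h(p) = 3$ for $p \leq y$ and $h(p) = -1$ for $p > y$, and the Dirichlet convolution $A_\phi = h \ast g$ produces a non-negative multiplicative function $g$ on primes $p \nmid N$.

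With $x = y^{u_0}$ for $u_0$ to be chosen, I would apply the sieve asymptotic of Lemma \ref{sieve1} to verify the positivity hypothesis of Lemma \ref{convolution} on the partial sums $\sum_{n \leq t,\,(n,N)=1} h(n)$ throughout the range $t \leq x$; this reduces to the positivity of Buchstab's function $\sigma(u)$ on $(0,u_0]$. Granted positivity, Lemma \ref{convolution} combined with Lemma \ref{sieve} yields the lower bound $S^\flat(y^{u_0}) \gg_{u_0} c(N)\,y^{u_0} \gg_{u_0} y^{u_0} Q^{-\epsilon}$. Pairing this against the upper bound $S^\flat(x) \ll x^{3/4} Q^{1/8 + \epsilon}$ from Lemma \ref{convex} (proved by contour shifting and the convexity estimate for $L(s,\mathrm{Ad}\,\phi)$) produces the key inequality (\ref{twominussix}): $y \ll_{u_0} Q^{1/(2u_0) + \epsilon}$.

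The final step is to optimize $u_0$. The largest admissible $u_0$ is any value strictly less than the smallest positive zero of $\sigma(u)$, which I would compute numerically using \emph{Mathematica} from the differential-difference equation in Lemma \ref{sieve1}, obtaining first zero $\approx 3.65887$. Inserting $u_0$ microscopically below this value together with the bound $Q \leq N^2(1+|t_\phi|)^2$ converts $Q^{1/(2u_0)+\epsilon}$ into $(N(1+|t_\phi|))^{1/u_0 + \epsilon}$, and $1/3.65887 < 0.27332$ delivers the claimed exponent. The only delicate point in the argument is confirming, from the sign pattern of $\sigma$ on $(0,u_0]$, that the positivity hypothesis of Lemma \ref{convolution} truly holds uniformly in $t$; everything else is a mechanical combination of the preceding lemmas with a numerical root computation.
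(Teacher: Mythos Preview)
Your proposal is correct and follows essentially the same approach as the paper: the paper's argument is precisely the chain Lemma~\ref{convex} $+$ Lemma~\ref{convolution} $+$ Lemma~\ref{sieve} yielding (\ref{twominussix}), followed by the \textit{Mathematica} computation of the first zero of $\sigma(u)\approx 3.65887$ and the substitution $Q\leq N^2(1+|t_\phi|)^2$. One terminological quibble: $\sigma(u)$ is not the Buchstab function (that is $\omega$ in Section~\ref{section2}); otherwise your outline matches the paper exactly, including the identification of the positivity of $\sum_{n\leq t}h(n)$ as the only point requiring care.
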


\begin{remark}
In Lemma \ref{convex}, the line of integration in  (\ref{vertical}) may be taken on $\{\real\bra{s}=\sigma\}$ instead of  $\{\real\bra{s}=\sigmaa\}$ for $1/2<\sigma<1$. This will result in a different version of Lemma \ref{convex}, i.e.,
$$S^\flat(x)\ll x^{\sigma}Q^{(1-\sigma)/2+\epsilon}.$$
However, this change has no impact on the final exponent in Theorem \ref{mainx}. 
\end{remark}

\begin{remark}
To estimate the smallest zero of $\sigma(u)$ without numerical computation, we have from Lemma \ref{sieve1}
$$\sigma(u)=7u^2-8u+2-4u^2\log u$$
for $1\leq u\leq 2$. It is not hard to prove that $\sigma(u)$ is monotone  for $1\leq u\leq 2$ and this leads us to conclude $\sigma(u)$ is positive for $1\leq u\leq 2$. Without numerical computation, we can have 1/4 as the exponent in Theorem \ref{mainx}.

For $2\leq u\leq 3$, we have 
\begin{multline*}
\sigma(u)=
16 u^2 Li_2(1-u)+(4 \pi^2 u^2)/3+35 u^2-24 u^2 \log(u-1)+16 u^2 \log(u-1) \log(u)\\-4 u^2 \log(u)-80 u+32 u \log(u-1)-8 \log(u-1)+34,
\end{multline*}
where $Li_2$ is the famous dilogarithm function (see \cite{Zag}). We leave to 
the reader to verify that $\sigma(u)$ is positive for $2\leq u\leq 3$.
\end{remark}

\section{Natural Density of Ramanujan Primes}\label{four}

Let $\phi$ be a primitve Maass form for $\gammao(N)$ with character $\chif$ and with Hecke eigenvalues $\lambdaf(n)$, following the same notations of the previous sections. 
We assume that $\phi$ is not of Artin type, 
since otherwise the full Ramanujan conjecture is known (\cite{KSh}).

In this section, we refine the density results of the Ramanujan primes in \cite{rama} and \cite{KSh} from \textit{Dirichlet density} to \textit{natural density}. We achieve the same constant by employing a similar but different method.
We will first quickly indicate how our method leads directly to 
the fact that the lower
{\it natural density} of the Ramanujan primes of $\phi$ is at least $9/10$,
and then improve it further to $34/35$ by a more elaborate argument.

The adjoint (Gelbart-Jacquet) lift  (see \cite{GJ}) of  $\phi$,
with its $L$-function defined by $L(s, \adjoint\phi)=\summ {\gjf(n)}/{n^s}, \Re(s) > 1$ where $\gjf(p)=\lambdaf(p^2)\overline{\chif}(p)$, is a cuspidal automorphic 
representation of $GL(3)$. The symmetric cube lift $\sym^3\phi$ and the twisted symmetric fourth power lift $\sym^4\phi\times\overline{\chif}^2$ are cuspidal automorphic representations of $GL(4)$ and $GL(5)$ respectively (see \cite{KSh2} and \cite{Ki}).
Let $$L(s,\sym^3\phi)=\summ \frac{\gjft(n)}{n^s}\andd L(s,\sym^4\phi\times \overline{\chif}^2)=\summ \frac{\gjff(n)}{n^s}$$ 
be their $L$-functions and $\{\alpha_p,\beta_p\}$ be the Satake parameters associated with $\phi$ at an unramified prime $p$. 
The Satake parameters of $\sym^3\phi$ are given by $\{\alpha_p^3, \alpha_p^2\beta_p,\alpha_p\beta_p^2,\beta^3_p\}$,
while those of $\sym^4\phi\times \overline{\chif}^2$ are given by 
$\{\alpha_p^2/\beta_p^2,\alpha_p/\beta_p, 1, \beta_p/\alpha_p,\beta_p^2/\alpha_p^2\}$.


In light of  the standard zero-free region of $L(s,\adjoint\phi)$ and $L(s,\sym^4\phi\times\overline{\chif}^2)$, the following Prime Number Theorem for $L$-functions holds. (see Theorem 5.13 of \cite{IK}).
\begin{lemma}\label{PNTlemma1}
We have 
\begin{equation*}
\sum_{p\leq X} \gjf(p) = o\bra{\frac{X}{\log X}}
\andd
\sum_{p\leq X} \gjff(p) = o\bra{\frac{X}{\log X}},
\end{equation*}
as $X\to \infty$.
\end{lemma}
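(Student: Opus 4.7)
The plan is to invoke the standard prime number theorem for cuspidal automorphic $L$-functions (Theorem 5.13 of \cite{IK}) applied to the two $L$-functions in the statement. Both $L(s,\adjoint\phi)$ and $L(s,\sym^4\phi\times\overline{\chif}^2)$ are principal $L$-functions of cuspidal automorphic representations of $GL(3)$ and $GL(5)$ respectively, by the Gelbart--Jacquet lift \cite{GJ} and by Kim's theorem \cite{Ki}. In particular, each is entire, satisfies a functional equation of the standard type, and, by Jacquet--Shalika, does not vanish on the line $\real(s)=1$. This non-vanishing extends to a standard zero-free region of the shape $\sigma > 1 - c/\log(Q(1+|t|))$ for some absolute $c>0$, which is the essential analytic input.

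The argument then proceeds in three familiar steps. First I would consider the logarithmic derivative $-L'(s,\pi)/L(s,\pi) = \summ \Lambda_\pi(n)\,n^{-s}$, whose Dirichlet coefficients are supported on prime powers with $\Lambda_\pi(p) = a_\pi(p)\log p$ at unramified primes. Applying Perron's formula and shifting the contour into the zero-free region, the absence of a pole at $s=1$ (cuspidality) produces $\psi_\pi(X) := \sum_{n\leq X}\Lambda_\pi(n) = o(X)$ with no main term. Second, I would separate the prime contribution from the prime-power contribution: the Jacquet--Shalika bound on local parameters together with the usual estimates makes the $k\geq 2$ terms of order $O(X^{1/2+\epsilon})$, so that $\sum_{p\leq X}a_\pi(p)\log p = o(X)$. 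Third, a routine partial summation removes the weight $\log p$ and yields $\sum_{p\leq X}a_\pi(p)=o(X/\log X)$. Specialising to $\pi = \adjoint\phi$ with $a_\pi(p)=\gjf(p)$ and to $\pi = \sym^4\phi\times\overline{\chif}^2$ with $a_\pi(p)=\gjff(p)$ produces the two asserted bounds of Lemma \ref{PNTlemma1}.

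The main obstacle is not analytic in nature: the genuinely deep ingredients are the cuspidality of $\adjoint\phi$ on $GL(3)$ and of $\sym^4\phi\times\overline{\chif}^2$ on $GL(5)$, both already available through the cited work and used as black boxes. The only point requiring a moment of care is ruling out a potential Landau--Siegel exceptional zero near $s=1$; for cuspidal automorphic $L$-functions on $GL(n)$ with $n\geq 3$ this is standard via Hoffstein--Ramakrishnan type arguments, and it is precisely this ineffective input that forces the conclusion to take the qualitative form $o(X/\log X)$ rather than a power-saving asymptotic. Apart from this, the proof is a direct application of the standard machinery.
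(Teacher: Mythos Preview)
Your proposal is correct and is essentially an expanded version of the paper's own justification: the paper simply appeals to the standard zero-free region for $L(s,\adjoint\phi)$ and $L(s,\sym^4\phi\times\overline{\chif}^2)$ and cites Theorem~5.13 of \cite{IK}, while you spell out the mechanics behind that citation. One minor remark: your discussion of the Landau--Siegel zero is unnecessary for the qualitative $o(X/\log X)$ conclusion, since even a real zero $\beta_1<1$ contributes only $-X^{\beta_1}/\beta_1 = o(X)$ to $\psi_\pi(X)$; the standard zero-free region alone already limits the error to $O(X\exp(-c\sqrt{\log X}))$, which is not power-saving regardless.
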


For the result on the natural density, let us first consider the sum
$$ S(X) = \sum _{p \leq X} (1 + 3 A_{\phi}(p))^{2}.$$
On one hand, we have
\[ S(X) > 10^{2} \#\{p\leq X, |\lambdaf(p)| > 2\} .\]
On the other hand, we have
\begin{eqnarray*} S(X) & = & \sum _{p\leq X} (1 + 6 A_{\phi}(p) + 
9 A_{\phi}(p)^{2}) \\
& = & \sum _{p\leq X} (1 + 6 A_{\phi}(p) + 9 (A_{\phi}^{[4]}(p) + A_{\phi}(p) + 1) )\\
& = & \sum _{p\leq X} (10 + 15 A_{\phi}(p) +
9 A_{\phi}^{[4]}(p)) \\
& = & 10 \pi(X) + o(\pi(X)),
\end{eqnarray*}
by Lemma \ref{PNTlemma1}. Hence we get
\[ \#\{p\leq X, |\lambda _{\phi}(p)| > 2\} \leq \frac{1}{10}  \pi(X) + 
o(\pi(X)),\]
or equivalently 
\[ \#\{p\leq X, |\lambda _{\phi}(p)| \leq 2\} \geq \frac{9}{10}  \pi(X) +
o(\pi(X)),\]
i.e., the lower
{\it natural density} of the Ramanujan primes of $\phi$ is at least $9/10$. 

Next we turn to the improvement of the above density result. A zero-free region of Rankin-Selberg $L$-functions has been established by Moreno in \cite{Mor}.
By the Tauberian theorem of Wiener and Ikehara (Theorem 1, page 311, \cite{La})
for $L'/L(s)$, where 
$L(s) = L(s, \Pi \times \overline{\Pi})$, and
$\Pi = \sym^3\phi$ or $\sym^4\phi$, we obtain the Prime Number Theorem for $L(s)$. 
\begin{lemma}
Let $\Lambda$ be the  von Mangoldt function.
We have 
$$\sum_{n\leq X} \Lambda(n)|\gjft(n)|^2 \sim X\andd \sum_{n\leq X} \Lambda(n)|\gjff(n)|^2 \sim X,$$
as $X\to \infty$.
\end{lemma}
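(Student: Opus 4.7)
The plan is to derive the stated asymptotic as an application of the Wiener--Ikehara Tauberian theorem to $-L'(s)/L(s)$, where $L(s) = L(s, \Pi \times \overline{\Pi})$ for $\Pi \in \{\sym^3\phi, \sym^4\phi\}$, followed by an interpretation of the log-derivative coefficients.

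For the analytic input, I would first record that cuspidality of $\sym^3\phi$ (Kim--Shahidi \cite{KSh2}) and $\sym^4\phi$ (Kim \cite{Ki}), combined with the Rankin--Selberg theory of Jacquet--Shalika and Moeglin--Waldspurger, ensures that $L(s, \Pi \times \overline{\Pi})$ admits a meromorphic continuation to $\mathbb{C}$ with a unique simple pole at $s=1$ of positive residue. The Moreno zero-free region \cite{Mor}, together with the standard non-vanishing of Rankin--Selberg $L$-functions on the line $\real(s)=1$, then ensures that $-L'/L(s)$ has a simple pole at $s=1$ of residue $1$ and is regular on $\real(s) = 1$ otherwise. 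Applying the Wiener--Ikehara Tauberian theorem (Theorem~1, p.~311 of \cite{La}) to $-L'/L(s)$ then yields
\[ \sum_{n \leq X} \Lambda_{\Pi \times \overline{\Pi}}(n) \sim X, \qquad X \to \infty, \]
where $\Lambda_{\Pi \times \overline{\Pi}}(n)$ denotes the $n$-th Dirichlet coefficient of $-L'/L(s)$.

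A direct computation from the local Euler factor at an unramified prime $p$ shows that these coefficients vanish off prime powers and that at $n = p^m$ they equal $(\log p)\bigl|\sum_i \alpha_{i,p}^m\bigr|^2$, where $\{\alpha_{i,p}\}$ are the Satake parameters of $\Pi$ at $p$. At $m = 1$ this equals $(\log p)|\gjft(p)|^2$ (resp.\ $(\log p)|\gjff(p)|^2$), precisely matching the contribution of primes to the sum appearing in the lemma.

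It remains to show that prime-power terms with $m \geq 2$ contribute $o(X)$ both in the Tauberian sum (where they involve the power sums $|\sum_i \alpha_{i,p}^m|^2$) and in the target sum (where they involve $|\gjft(p^m)|^2 = |h_m(\alpha_{\cdot,p})|^2$, the $m$-th complete homogeneous symmetric function of the Satake parameters). This reduction to primes is the main technical obstacle. One natural route exploits the Schur-function expansions $p_m = \sum_{\lambda \vdash m,\,\lambda\text{ a hook}} \pm s_\lambda$ and $h_m = s_{(m)}$ to dominate both $|p_m(\alpha_{\cdot,p})|^2$ and $|h_m(\alpha_{\cdot,p})|^2$ in terms of the $p^m$-coefficient $c_\Pi(p^m) = \sum_{\lambda \vdash m}|s_\lambda(\alpha_{\cdot,p})|^2$ of the nonnegative Dirichlet series $L(s, \Pi \times \overline{\Pi})$ (via the Cauchy identity). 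The desired $o(X)$ estimate then follows by combining the Rankin--Selberg absolute convergence for $\real(s)>1$, the Kim--Sarnak bound on the Satake parameters of $\phi$ (which gives polynomial control on those of $\sym^3\phi$ and $\sym^4\phi$), and the sparseness of prime powers of exponent $\geq 2$ (which are only $O(X^{1/2})$ up to $X$). The argument for $\sym^4\phi \times \overline{\chif}^2$ in place of $\sym^4\phi$ is entirely analogous, with the character twist absorbed into the Satake parameters.
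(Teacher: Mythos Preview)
Your approach—Wiener--Ikehara applied to $-L'/L(s)$ for $L(s)=L(s,\Pi\times\overline\Pi)$ with $\Pi=\sym^3\phi$ or $\sym^4\phi$—is exactly the paper's; the paper's entire argument is the single sentence preceding the lemma, citing Moreno's zero-free region and Lang's version of Wiener--Ikehara.

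You are in fact more careful than the paper in distinguishing the log-derivative coefficients $(\log p)\lvert p_m(\alpha_{\cdot,p})\rvert^2$ from $\Lambda(p^m)\lvert\gjft(p^m)\rvert^2=(\log p)\lvert h_m(\alpha_{\cdot,p})\rvert^2$ at higher prime powers, a point the paper passes over entirely. One caution about your sketch for the $m\ge 2$ terms: the ingredients you list do not obviously combine to give $o(X)$. The Kim--Sarnak bound yields $\lvert\alpha_{i,p}\rvert\le p^{k\cdot 7/64}$ for $\sym^k\phi$, so pointwise control plus the $O(X^{1/2})$ count of higher prime powers gives only $O(X^{1/2+k\cdot 7/32+\epsilon})$, whose exponent exceeds $1$ for $k=3,4$; and the Rankin--Selberg pole at $s=1$ bounds $\sum_{p^m\le X}c_\Pi(p^m)$ by $O(X)$, not $o(X)$. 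So this step would need more work—though the paper makes no attempt at it either.

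For the downstream use (Remark~\ref{weakpnt} and Theorem~\ref{natural}) the issue is moot: since $\Lambda_{\Pi\times\overline\Pi}(p)=(\log p)\lvert\gjft(p)\rvert^2$ (resp.\ $\lvert\gjff(p)\rvert^2$) and all $\Lambda_{\Pi\times\overline\Pi}(n)\ge 0$, Wiener--Ikehara plus positivity already gives $\sum_{p\le X}(\log p)\lvert\gjft(p)\rvert^2\le(1+o(1))X$, and hence the $\limsup$ bound in Remark~\ref{weakpnt}, without any prime-power comparison.
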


\begin{remark}\label{weakpnt}
The previous lemma implies $$\limsup_{X\to\infty}\frac{\sum_{p\leq X} |\gjff(p)|^2}{\pi(X)}\leq 1\andd \limsup_{X\to\infty}\frac{\sum_{p\leq X} |\gjft(p)|^2}{\pi(X)}\leq 1.$$
\end{remark}

\begin{theorem}\label{natural}
We have
$$\liminf_{X\to\infty} \frac{\#\{p\leq X,  |\lambda _{\phi}(p)| \leq 2\}}{\pi(X)}
\geq \frac{34}{35}.$$
\end{theorem}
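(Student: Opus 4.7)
The plan is to extend the $9/10$ argument given earlier in the paper by enlarging the test polynomial to include $\gjff(p)$. I would consider
$$ S(X) = \sum_{\substack{p \leq X\\ p \nmid N}} \bigl(1 + 3\gjf(p) + 5\gjff(p)\bigr)^{2}. $$
The coefficient triple $(1,3,5)$ is the Cauchy--Schwarz optimum for $1^{2}+3^{2}+5^{2}=35$, matched to the non-Ramanujan thresholds $\gjf(p)>3$ and $\gjff(p)>5$ coming from Lemma \ref{lemma1} at $n=1$ and $n=2$. Since $1+3\gjf(p)+5\gjff(p)>35$ on every non-Ramanujan prime, we have $S(X) > 35^{2}\cdot\#\{p\leq X : p\nmid N,\ |\lambdaf(p)|>2\}$.

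The task is the matching upper bound $S(X) \leq 35\,\pi(X) + o(\pi(X))$. After expanding the square, the linear sums in $\gjf$ and $\gjff$ are $o(\pi(X))$ by Lemma \ref{PNTlemma1}. The $\gjf^{2}$ sum is handled exactly by the local Hecke identity $\gjf(p)^{2}=\gjff(p)+\gjf(p)+1$ (immediate from $\gjf(p) = \alpha_{p}/\beta_{p} + 1 + \beta_{p}/\alpha_{p}$, and equivalent to Lemma \ref{lemma1} at $n=1$), yielding $\sum_{p \leq X} \gjf(p)^{2} = \pi(X)+o(\pi(X))$. For the $\gjff^{2}$ sum --- which enters with the positive coefficient $25$, so only an upper bound is needed --- Remark \ref{weakpnt} supplies $\sum_{p \leq X} \gjff(p)^{2} \leq (1+o(1))\pi(X)$.

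The key new input is the cross term $\sum_{p \leq X} \gjf(p)\gjff(p)$, which appears with positive coefficient $30$. For this I would verify the pointwise identity
$$ \gjf(p)\,\gjff(p) = |\gjft(p)|^{2} - 1 \qquad (p\nmid N). $$
This is a direct Satake-parameter computation: setting $\xi = \alpha_{p}/\beta_{p}$, both sides equal $\xi^{3}+2\xi^{2}+3\xi+3+3\xi^{-1}+2\xi^{-2}+\xi^{-3}$, where on the right one first observes $\overline{\gjft(p)} = \overline{\chif}(p)^{3}\gjft(p)$ (a consequence of the self-duality $\overline{\phi}=\phi\otimes\overline{\chif}$, i.e.\ $\{\overline{\alpha_{p}},\overline{\beta_{p}}\}=\{\alpha_{p}^{-1},\beta_{p}^{-1}\}$), so $|\gjft(p)|^{2}=\gjft(p)^{2}\overline{\chif}(p)^{3}$. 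Summing and applying Remark \ref{weakpnt} to $|\gjft(p)|^{2}$ gives $\sum_{p \leq X}\gjf(p)\gjff(p) \leq o(\pi(X))$.

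Assembling the six pieces, $S(X) \leq (1+9+25)\,\pi(X)+o(\pi(X)) = 35\,\pi(X)+o(\pi(X))$, and comparison with the lower bound produces $\#\{p\leq X: p\nmid N,\ |\lambdaf(p)|>2\} \leq \pi(X)/35 + o(\pi(X))$, which is the theorem. The part I expect to require the most care is producing the pointwise identity $\gjf\gjff = |\gjft|^{2}-1$ cleanly and keeping track of the various powers of $\chif$ in the symmetric-power lifts so that $|\gjft|^{2}$, the quantity for which Remark \ref{weakpnt} supplies an upper bound, is what ultimately appears on the right-hand side.
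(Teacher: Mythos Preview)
Your proposal is correct and is essentially identical to the paper's own proof: the paper uses the same test quantity $U(p)=(1+3\gjf(p)+5\gjff(p))^2$, the same pair of Hecke identities $\gjf(p)^2=\gjff(p)+\gjf(p)+1$ and $\gjf(p)\gjff(p)=|\gjft(p)|^2-1$, and the same appeals to Lemma~\ref{PNTlemma1} and Remark~\ref{weakpnt} to obtain $\limsup_X \sum_{p\le X}U(p)/\pi(X)\le 35$. Your write-up in fact supplies more detail than the paper does on verifying the cross-term identity via the Satake parameters.
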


\begin{proof}
Define $U(p)=\left(1+3\gjf(p)+5\gjff(p)\right)^2$.
Obviously we have $U(p)\geq 0$ and if $p$ is not a Ramanujan prime, we have $$U(p)>35^2.$$
By the Hecke relations
$$\gjf(p) \gjff(p)=|\gjft(p)|^2-1\andd \gjf(p)^2=\gjff(p)+\gjf(p)+1,$$
we have 
\begin{eqnarray}
U(p)&=&1+9\gjf(p)^2+25\gjff(p)^2+6\gjf(p)+10\gjff(p)+30\gjf(p) \gjff(p)\nonumber\\
&=&-20+15\gjf(p)+19\gjff(p)+30|\gjft(p)|^2+25\gjff(p)^2. \nonumber
\end{eqnarray}
By the previous two lemmas, we have 
\begin{equation}\label{sup}
\limsup_{X\to\infty}\frac{\sum_{p\leq X}U(p)}{\pi(X)}\leq 35.
\end{equation}
We have 
\begin{equation*}
\frac{\sum_{p\leq X}U(p)}{\pi(X)}\geq \frac{35^2(\pi(X)-\raman)}{\pi(X)}
\end{equation*}
and then $$\frac{\raman}{\pi(X)}\geq 1-\frac{\sum_{p\leq X}U(p)}{35^2\pi(X)}.$$ Hence by (\ref{sup}) we get $\liminf\limits_{X\to\infty}{\raman}/{\pi(X)}\geq 34/35$.
\end{proof}


$$\;$$
\noindent {\scshape{Wenzhi Luo}} and {\scshape{Fan Zhou}} \\
Department of Mathematics\\
The Ohio State University\\
Columbus, OH 43210, USA\\
wluo@math.ohio-state.edu, zhou.1406@math.osu.edu
\end{document}